\documentclass[10pt,leqno]{amsart}
\usepackage{amsmath}
\usepackage{amsfonts}
\usepackage{amssymb}
\usepackage{mathtools}
\usepackage{graphicx}
\usepackage{hyperref}

\allowdisplaybreaks[4]

\usepackage{xcolor}
\newtheorem{theorem}{Theorem}[section]
\newtheorem*{theorem*}{Theorem}
\newtheorem{lemma}[theorem]{Lemma}

\theoremstyle{definition}
\newtheorem{definition}[theorem]{Definition}
\newtheorem{example}[theorem]{Example}

\newtheorem{remark}{Remark}[section]

\newcommand{\Rtwo}{\mathring{R}}
\newcommand{\Ric}{\operatorname{Ric}}

\newcommand{\tr}{\operatorname{tr}}
\newcommand{\R}{\mathbb{R}}

\author[Cheng]{Haiqing Cheng}
\address{School of Mathematical Sciences, Soochow University, Suzhou, 215006, China}
\email{chq4523@163.com}

\author[Wang]{Kui Wang}
\address{School of Mathematical Sciences, Soochow University, Suzhou, 215006, China}
\email{kuiwang@suda.edu.cn}

\title[Sphere Theorems for $\frac{10}{3}$-Positivity]
{Sphere Theorems for $\frac{10}{3}$-Positive Curvature Operator
of the Second Kind}

\subjclass[2020]{53C20, 53C21, 53E20}
\keywords{Curvature operator of the second kind, PIC1, sphere theorem}

\begin{document}

\begin{abstract}
In this paper, we prove that every connected closed Riemannian manifold of dimension $n\geq4$ with $\frac{10}{3}$-positive curvature operator of the second kind is diffeomorphic to a spherical space form. 
We also obtain a classification of connected closed Riemannian manifolds with $\frac{10}{3}$-nonnegative curvature operator of the second kind. 
The key step is to show that $\frac{10}{3}$-positivity implies the PIC1 condition.
Finally, we give an example showing that $\frac{10}{3}$ is optimal for deducing PIC1 from $\alpha$-positivity.
\end{abstract}

\maketitle

\section{Introduction}
The action of the Riemannian curvature tensor on symmetric two-tensors has a long
history. It appeared in K\"ahler geometry in the work of Calabi and Vesentini
\cite{CV60} on deformations of compact locally symmetric K\"ahler manifolds.
In the Riemannian setting, the same action arises naturally in the deformation
theory of Einstein metrics and in Bochner--Weitzenb\"ock formulas. 
Bourguignon and Karcher \cite{BK78} studied its pinching properties and computed its eigenvalues on important compact symmetric spaces, including complex and quaternionic projective spaces.

Let $(M^n, g)$ be a Riemannian manifold. Since the space $S^2(T_pM)$ of symmetric two-tensors is not irreducible under
the orthogonal group, one considers the restriction to the traceless part
$S_0^2(T_pM)$. 
Following Nishikawa \cite{Nishikawa86}, the induced self-adjoint operator on $S_0^2(T_pM)$ is called the curvature operator of the
second kind. Nishikawa \cite{Nishikawa86} conjectured that a closed Riemannian manifold with
positive curvature operator of the second kind is diffeomorphic to a spherical
space form, and that a closed manifold with nonnegative curvature operator of
the second kind is diffeomorphic to a Riemannian locally symmetric space.
% Earlier evidence included the real-homology-sphere theorem of Ogiue and
% Tachibana \cite{OT79} and rigidity results of Kashiwada \cite{Kashiwada93}.

The subject has developed rapidly in recent years. Cao, Gursky, and Tran
\cite{CGT23} proved the positive part of Nishikawa's conjecture under the
weaker assumption of two-positivity. Their key observation is that
two-positive curvature operator of the second kind implies PIC1, so that one
can apply Brendle's Ricci-flow convergence theorem. Li \cite{Li21}
subsequently weakened two-positivity to three-positivity and classified closed
manifolds with three-nonnegative curvature operator of the second kind. 
% Li also proved that $4\frac12$-positive curvature operator of the second kind
% implies positive isotropic curvature and positive Ricci curvature
% \cite{Li22JGA}. 
Li also proved that $4\frac12$-positive curvature operator of the second kind implies positive isotropic curvature, and that $(n+\frac{n-2}{n})$-positive curvature operator of the second kind implies positive Ricci curvature \cite{Li22JGA}.
Further developments include results for K\"ahler manifolds
\cite{Li22Kahler, Li22PAMS}, product manifolds \cite{Li22product}, dimension
three \cite{FLi24}, Betti numbers and holonomy \cite{NPW23, NPWW23}, as well as more
general cone conditions \cite{Li25}.

Rigidity questions for Einstein manifolds under eigenvalue and cone conditions on the curvature operator of the second kind have been
studied in \cite{DF24, CW25-2, CW26-1, FL25, CW26}. Related results for
manifolds with harmonic curvature were obtained in \cite{FLD26}.
More recently, Li and Petersen obtained further rigidity results
involving the curvature operator of the second kind and its shifted
variants \cite{LP26COSK}, while Cao and Li established an invariant
relation between the curvature operators of the first and second kind
in dimension four \cite{CL26}. Other related developments include
Tachibana-type rigidity results for several curvature operators
\cite{LP26Tachibana} and a characterization of complex projective
space via the Calabi curvature operator \cite{EHLNPSW26}.

Let $\lambda_1\leq\lambda_2\leq\cdots\leq\lambda_N$ be the eigenvalues of the curvature operator of the second kind at a point $p\in M$, where
\[
N=\dim S_0^2(T_pM)=\frac{(n-1)(n+2)}2.
\]
For $\alpha\in[1,N)$, write $\alpha=k+t$, where
$k=\lfloor\alpha\rfloor$ and $0\leq t<1$. The curvature operator of
the second kind is called $\alpha$-positive (respectively,
$\alpha$-nonnegative) if
\[
\lambda_1+\cdots+\lambda_k+t\lambda_{k+1}>0
\qquad
(\text{respectively, }\geq0).
\]
When $\alpha=N$, these notions are defined using the full trace
$\lambda_1+\cdots+\lambda_N$ and are equivalent to positive and
nonnegative scalar curvature, respectively; see
\cite[Proposition 4.1(1)]{Li21}.
In particular,
$\frac{10}{3}$-positivity means
\[
\lambda_1+\lambda_2+\lambda_3+\frac13\lambda_4>0.
\]
If the condition holds at every point $p\in M$, the manifold is said to be
$\alpha$-positive (respectively, $\alpha$-nonnegative).
% Let
% \[
% \lambda_1\leq\lambda_2\leq\cdots\leq\lambda_N
% \]
% be the eigenvalues of the curvature operator of the second kind, where
% \[
% N=\dim S_0^2(T_pM)=\frac{(n-1)(n+2)}2.
% \]
% For a real number $\alpha\in[1, N)$, write
% \[
% \alpha=k+t,
% \qquad
% k=\lfloor\alpha\rfloor,
% \qquad
% 0\leq t<1.
% \]
% The operator is called $\alpha$-positive if
% \[
% \lambda_1+\cdots+\lambda_k+t\lambda_{k+1}>0,
% \]
% and $\alpha$-nonnegative if the same expression is nonnegative. When
% $\alpha=N$, these notions are defined using the full trace
% $\lambda_1+\cdots+\lambda_N$. In particular,
% $\frac{10}{3}$-positivity means
% \[
% \lambda_1+\lambda_2+\lambda_3+\frac13\lambda_4>0.
% \]

Our main result is the following.

\begin{theorem}\label{thm:main}
Let $(M^n,g)$ be a connected closed Riemannian manifold of dimension
$n\geq4$.
\begin{enumerate}
\item
If the curvature operator of the second kind is $\frac{10}{3}$-positive, then $M$ is diffeomorphic to a spherical space form.

\item
If the curvature operator of the second kind is $\frac{10}{3}$-nonnegative, then either $(M,g)$ is flat or $M$ is diffeomorphic to a
spherical space form.
\end{enumerate}
\end{theorem}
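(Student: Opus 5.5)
The plan follows the strategy of Cao--Gursky--Tran and Li: reduce everything to a pointwise algebraic statement, namely that $\frac{10}{3}$-positivity (resp. nonnegativity) of $\Rtwo$ implies PIC1 (resp. nonnegative isotropic curvature on $M\times\R$, i.e. nonnegative PIC1). Then invoke the Ricci flow machinery. For part (1), Brendle's convergence theorem says that a closed manifold whose curvature is PIC1 at every point evolves under normalized Ricci flow to a constant curvature metric, so $M$ is diffeomorphic to a spherical space form. For part (2), the PIC1-nonnegative case follows the classification argument of Li \cite{Li21} for three-nonnegativity. Run Ricci flow and use the strong maximum principle (Brendle--Schoen) for the PIC1 cone. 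Either the curvature becomes strictly PIC1 for $t>0$, giving a space form, or the holonomy reduces. A reduced holonomy means one of three situations: $M$ is locally a product, $M$ is Kähler, or $M$ is locally symmetric. One then checks directly, using the eigenvalue computations on products and Kähler/symmetric models (cf. \cite{Li22product, Li22Kahler, BK78}), that none of these is $\frac{10}{3}$-nonnegative unless it is flat. For products the key point is that a product $M_1\times M_2$ has many zero or negative eigenvalues of $\Rtwo$ coming from mixed traceless tensors. Taking three such directions plus one third of a fourth yields a negative sum unless the curvature vanishes.

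The core lemma is the following. If $\lambda_1+\lambda_2+\lambda_3+\frac13\lambda_4>0$, then for every orthonormal four-frame $\{e_1,e_2,e_3,e_4\}$ and every $\mu\in[0,1]$,
\[
R_{1313}+\mu^2R_{1414}+R_{2323}+\mu^2R_{2424}-2\mu R_{1234}>0 .
\]
This is the PIC1 inequality. To prove it, I will use the variational characterization: for any orthonormal traceless symmetric tensors $\varphi_1,\dots,\varphi_4$ and weights $1,1,1,\frac13$,
\[
\sum_{i=1}^3\Rtwo(\varphi_i,\varphi_i)+\tfrac13\Rtwo(\varphi_4,\varphi_4)\ge\lambda_1+\lambda_2+\lambda_3+\tfrac13\lambda_4 .
\]
More flexibly, any family $\{\varphi_j\}$ with weights $w_j$ can be used, provided the weighted sum is dominated in Ky Fan fashion. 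That is, the operator $\sum w_j\varphi_j\otimes\varphi_j$ must have eigenvalues at most $1$ and trace $\frac{10}{3}$. I will choose test tensors built from $e_i\odot e_j$ and $e_i\otimes e_i-e_j\otimes e_j$ in the span of $e_1,\dots,e_4$. These are chosen so that the combination $\sum w_j\Rtwo(\varphi_j,\varphi_j)$ equals a positive multiple of the PIC1 expression, allowing $\mu$-dependent coefficients. Recall $\Rtwo(e_i\odot e_j,e_i\odot e_j)\propto R_{ijij}$ plus Ricci-free terms, and the cross term $R_{1234}$ arises from mixing $e_1\odot e_3+e_2\odot e_4$ type tensors with $e_1\odot e_4-e_2\odot e_3$. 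I would average over the symmetries of the PIC1 expression (e.g. $e_1\leftrightarrow e_2$, $e_3\leftrightarrow e_4$ combined with $\mu$) so that unwanted terms such as $R_{1212}$, $R_{3434}$ and diagonal terms cancel or appear with nonnegative coefficients. Those surviving terms can then be absorbed by using a further test family. For the nonnegative case, the same inequality holds with $\ge$.

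The main obstacle is precisely finding this weighted family of test tensors with total weight $\frac{10}{3}$ and maximal weight $1$ that reproduces the PIC1 expression for all $\mu\in[0,1]$. The sharpness of $\frac{10}{3}$, asserted via the example in the paper, suggests the construction is tight. My first attempt will be a one-parameter family: three tensors of weight $1$ and a fourth of weight $\frac13$, with coefficients being rational functions of $\mu$ normalized by $1+\mu^2$. I will first solve the linear system matching coefficients of $R_{1313},R_{1414},R_{2323},R_{2424},R_{1234},R_{1212},R_{3434}$, then check the Ky Fan domination condition. If a single family fails on part of $[0,1]$, I will split into $\mu$ near $0$ and $\mu$ near $1$. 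Near $\mu=1$ the condition is PIC, which is already implied by $4\frac12$-positivity \cite{Li22JGA}. Near $\mu=0$ the condition reduces essentially to positivity of sectional-type sums. In each regime I will use a different family and then interpolate via convexity in $\mu^2$.
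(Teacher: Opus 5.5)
Your reduction to a pointwise statement, part (1) via Brendle's theorem, and your generalized Ky Fan criterion are correct. That criterion says any $P=\sum_j w_j\varphi_j\otimes\varphi_j$ with $0\le P\le\operatorname{Id}$ and $\tr P=\frac{10}{3}$ satisfies $\tr(\Rtwo P)\ge\Lambda:=\lambda_1+\lambda_2+\lambda_3+\frac13\lambda_4$. The gap is that the algebraic lemma, which is the heart of the paper, is not proved. You describe a search, and neither concrete route you name closes.

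First, off-diagonal test tensors cannot work near $\mu=0$. Write $I(\mu)$ for the PIC1 expression. Suppose $P$ is supported on $\operatorname{span}\{E_{13},E_{14},E_{23},E_{24}\}$. Evaluating $\tr(\Rtwo P)=c\,I(\mu)$ on the curvature tensor whose only nonzero component (up to symmetries) is $R_{1313}$ gives $P_{13,13}=c$. Likewise $P_{23,23}=c$ and $P_{14,14}=P_{24,24}=c\mu^2$. These entries are $\le1$ and sum to $\frac{10}{3}$, so $c=\frac{5}{3(1+\mu^2)}\le1$, i.e. $\mu^2\ge\frac23$. So the regime near $\mu=0$ is not a routine sectional case. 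You need extra weight on $R_{1313}+R_{2323}$ from a diagonal tensor, and its $R_{1212}$ contribution must cancel exactly.

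Second, your fallback is invalid. Fix a frame and write $A=R_{1313}+R_{2323}$, $B=R_{1414}+R_{2424}$, $C=R_{1234}$, with $C\ge0$ after replacing $e_4$ by $-e_4$. Then $A+sB-2\sqrt{s}\,C$ is convex in $s=\mu^2$. Positivity at $\mu=0$ and at $\mu=1$ (PIC, from $4\frac12$-positivity) therefore gives no control for $0<\mu<1$. Mixing the two endpoint families yields only combinations of $A$ and $A+B-2C$, whose $B$ and $C$ coefficients stay in ratio $1:-2$ rather than $\mu^2:-2\mu$. The test tensors themselves have to depend on $\mu$.

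The missing idea is to pair $H_1=\frac{1}{\sqrt6}(e_1\otimes e_1+e_2\otimes e_2-2e_3\otimes e_3)$ with $E_{12}$ at weight exactly $\frac13$. This gives $\Rtwo(H_1,H_1)+\frac13\Rtwo(E_{12},E_{12})=\frac23(R_{1313}+R_{2323})$, and it is where the fraction $\frac13$ is used. At $\mu=0$ the quadruple $(E_{13},E_{23},H_1,E_{12})$ already gives $\frac53(R_{1313}+R_{2323})\ge\Lambda$. For general $\mu$ the paper sets $\varphi_1=aE_{13}+bE_{24}$ and $\varphi_2=bE_{14}-aE_{23}$, with orthogonal partners $\psi_1,\psi_2$. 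It averages the quadruples $(\varphi_1,\varphi_2,\psi_1,\psi_2)$, $(\varphi_1,\varphi_2,\psi_2,\psi_1)$, $(\varphi_1,\varphi_2,H_1,E_{12})$ with weights $\frac{3\beta}{4},\frac{3\beta}{4},d$, where $d+\frac32\beta=1$. The resulting operator has spectrum $(1,1,\beta,\beta,d,\frac d3)$, so it is not a single quadruple once $\mu>0$. Solving explicitly for $a,b,\beta,d\ge0$ yields exactly $\frac{5}{3(1+\mu^2)}I(\mu)$.

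In part (2) your symmetric-space step also needs repair. \cite{BK78} treats only particular spaces, and spaces of constant positive curvature survive. The check must therefore conclude ``constant curvature'', not ``flat''. The paper handles the cases as follows:
\begin{itemize}
\item symmetric spaces: \cite[Theorem A]{NPWW23};
\item K\"ahler: \cite[Theorem 1.9]{Li21};
\item products: Cheeger--Gromoll together with \cite[Theorem 1.8]{Li21}.
\end{itemize}
It also cites Brendle's classification, which is a statement about $g$ itself, instead of re-running the flow. You have no control of $\frac{10}{3}$-nonnegativity along Ricci flow, so your holonomy checks must be made on the initial metric.
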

The crucial point is the following algebraic lemma. Here PIC1 means that the
product curvature tensor on $V\times\mathbb R$ has positive isotropic
curvature, whereas NIC1 means that it has nonnegative isotropic curvature.
For the precise four-frame characterizations, we refer the reader to Definition \ref{def:PIC1}.
% The corresponding four-frame characterizations are recalled precisely in
% Definition \ref{def:PIC1}.

\begin{lemma}\label{lem:main-algebraic}
Let $R$ be an algebraic curvature tensor on an $n$-dimensional Euclidean vector
space $V$, where $n\geq4$.
\begin{enumerate}
\item If $R$ has $\frac{10}{3}$-positive curvature operator of the second
kind, then $R$ is PIC1.
\item If $R$ has $\frac{10}{3}$-nonnegative curvature operator of the second
kind, then $R$ is NIC1.
\end{enumerate}
\end{lemma}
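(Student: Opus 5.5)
The plan is to test $\Rtwo$ on a small, well-chosen family of traceless symmetric two-tensors built from an orthonormal four-frame, and then recover the PIC1 expression as a weighted sum of the resulting Rayleigh quotients. I will use the four-frame form of PIC1 from Definition~\ref{def:PIC1}: $R$ is PIC1 if and only if
\[
Q(\lambda):=R_{1313}+\lambda^2R_{1414}+R_{2323}+\lambda^2R_{2424}-2\lambda R_{1234}>0
\]
for every orthonormal four-frame $\{e_1,e_2,e_3,e_4\}$ and every $\lambda\in[0,1]$. NIC1 is the same statement with $\geq0$.

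The abstract tool is a Ky Fan type inequality. Suppose $\varphi_1,\dots,\varphi_m$ are orthonormal in $S^2_0(V)$ and $w_i\in[0,1]$ satisfy $\sum_i w_i=\frac{10}{3}$. Then
\[
\sum_i w_i\,\Rtwo(\varphi_i,\varphi_i)\;\geq\;\lambda_1+\lambda_2+\lambda_3+\tfrac13\lambda_4 .
\]
This holds because the left side is $\tr(\Rtwo P)$ for a self-adjoint $P$ with $0\leq P\leq I$ and $\tr P=\frac{10}{3}$, and the minimum of $\tr(\Rtwo P)$ over such $P$ is exactly the weighted bottom sum. Consequently, $\frac{10}{3}$-positivity (respectively nonnegativity) makes every such weighted sum positive (respectively nonnegative). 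The inequality persists if we average it over several orthonormal families, for example families obtained from one another by reflections or rotations of the frame.

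\textbf{Step 1: the test tensors.} Fix a frame and $\lambda$. Following the pattern of Cao--Gursky--Tran and Li, I take tensors of the form
\[
\tfrac{1}{\sqrt2}\bigl(e_1\odot e_3\pm e_2\odot e_4\bigr),\qquad
\tfrac{1}{\sqrt2}\bigl(e_1\odot e_4\mp e_2\odot e_3\bigr),
\]
together with diagonal tensors such as $\tfrac12\bigl(e_1\otimes e_1+e_2\otimes e_2-e_3\otimes e_3-e_4\otimes e_4\bigr)$. Where needed, I will let $\lambda$ enter through rotated tensors such as $\cos\theta\,(e_1\odot e_3)+\sin\theta\,(e_2\odot e_4)$. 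Using the formula for $\Rtwo$ on $e_i\odot e_j$ and on diagonal tensors, I will express each $\Rtwo(\varphi,\varphi)$ through the sectional curvatures $R_{ijij}$ and the mixed terms $R_{1234}$, $R_{1324}$, $R_{1423}$.

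\textbf{Step 2: choice of weights.} I will choose weights with values in $\{1,1,1,\tfrac13\}$, or $\lambda$-dependent weights that sum to $\frac{10}{3}$ and each lie in $[0,1]$. I will then average over the sign and reflection symmetries $e_i\mapsto -e_i$ and $e_3\leftrightarrow e_4$. The goal is for the weighted average to equal $c\,Q(\lambda)$ for some $c>0$, possibly plus a term that is visibly $\leq0$, so that positivity of the average forces $Q(\lambda)>0$. The Bianchi identity $R_{1234}-R_{1324}+R_{1423}=0$ will be used to eliminate the unwanted mixed terms, as in the $3$-positive case.

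\textbf{Main obstacle.} The difficulty is matching the $\lambda^2$ and $-2\lambda$ coefficients uniformly for all $\lambda\in[0,1]$ with weights capped at $1$. Any tensor involving the diagonal part tends to introduce extra sectional curvatures such as $R_{1212}$ and $R_{3434}$, which must be cancelled. I would first reduce to the endpoints by convexity: $Q(\lambda)$ is quadratic in $\lambda$ with leading coefficient $R_{1414}+R_{2424}$. When that coefficient is nonnegative, one can try to handle $\lambda=1$ (the PIC condition with the $\frac13$-weighted extra tensor absorbing the diagonal error) and $\lambda=0$ (essentially $R_{1313}+R_{2323}>0$, which follows from $\frac{10}{3}$-positivity with room to spare) separately. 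When it is negative, one must still treat interior $\lambda$, and this is where the fractional weight $\frac13$ on a fourth tensor should be exactly what is needed. The optimality example mentioned in the abstract suggests the extremal configuration: the weights should be tuned so that equality is attained there.

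Part~(2) follows by the same computation with non-strict inequalities, or alternatively by applying part~(1) to $R+\varepsilon R_{S^n}$ and letting $\varepsilon\to0$.
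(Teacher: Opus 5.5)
\textbf{There is a genuine gap: the proposal never produces the identity the proof rests on, and the heuristics meant to guide the search point the wrong way.}

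What you have is sound. Your Ky Fan inequality for weights $w_i\in[0,1]$ with $\sum w_i=\frac{10}{3}$ is correct; the paper's three applications of Lemma~\ref{lem:ordered-quadruple} combine into exactly one instance of it. The perturbation $R+\varepsilon R_{S^n}$ for part (2) also works, because $\Rtwo$ of the round tensor is the identity on $S^2_0(V)$. What is missing is an explicit orthonormal family and weights with $\sum_i w_i\Rtwo(\xi_i,\xi_i)=c\,Q(\lambda)$ for all $\lambda\in[0,1]$. Steps 1--2 only describe a search for it, and three of its guiding claims are wrong.

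\emph{Convexity.} The reduction is reversed. If $R_{1414}+R_{2424}\geq0$, then $Q$ is convex, and it can be negative in the interior while positive at $\lambda=0,1$. Endpoints suffice only in the concave case.

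\emph{Which endpoint is delicate.} $\lambda=0$ is not the easy endpoint; it is the sharp one. The paper's example is $\alpha$-positive for every $\alpha>\frac{10}{3}$, yet $R_{1313}+R_{2323}=0$. By contrast, $\lambda=1$ (PIC) already follows from $4\frac12$-positivity. So the fractional weight $\frac13$ has to be spent at $\lambda=0$, not at $\lambda=1$.

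\emph{Your diagonal tensor.} $\frac12(e_1\otimes e_1+e_2\otimes e_2-e_3\otimes e_3-e_4\otimes e_4)$ gives $\frac12(R_{1313}+R_{1414}+R_{2323}+R_{2424})-\frac12(R_{1212}+R_{3434})$. It carries $R_{1414}+R_{2424}$ with the same coefficient as $R_{1313}+R_{2323}$, which is the wrong shape near $\lambda=0$. It also needs weight $\frac12$ on each of $E_{12},E_{34}$ (with $E_{ij}=\frac1{\sqrt2}e_i\odot e_j$) to clean up.

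\emph{The missing idea.} Use $H_1=\frac1{\sqrt6}(e_1\otimes e_1+e_2\otimes e_2-2e_3\otimes e_3)$, which satisfies $\Rtwo(H_1,H_1)+\frac13\Rtwo(E_{12},E_{12})=\frac23(R_{1313}+R_{2323})$. Here the fractional weight cancels $R_{1212}$ exactly. At $\lambda=0$, the family $E_{13},E_{23},H_1,E_{12}$ with weights $1,1,1,\frac13$ gives $\frac53(R_{1313}+R_{2323})\geq\lambda_1+\lambda_2+\lambda_3+\frac13\lambda_4$.

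For general $\lambda$, the paper proceeds as follows.
\begin{enumerate}
\item It rotates: $\varphi_1=aE_{13}+bE_{24}$, $\psi_1=-bE_{13}+aE_{24}$, $\varphi_2=bE_{14}-aE_{23}$, $\psi_2=aE_{14}+bE_{23}$.
\item It puts weights $1,1,\beta,\beta,d,\frac d3$ on $\varphi_1,\varphi_2,\psi_1,\psi_2,H_1,E_{12}$, with $d+\frac32\beta=1$ so that the total is $\frac{10}{3}$.
\item It uses the Bianchi identity in the form $\Rtwo(E_{13},E_{24})-\Rtwo(E_{14},E_{23})=-3R_{1234}$.
\item It solves the three coefficient equations, obtaining $c=\frac{5}{3(1+\lambda^2)}$, $a:b=3(3-2\lambda^2):5\lambda$, and $\beta=\frac{10\lambda^2(11-9\lambda^2)}{27(1+\lambda^2)(3-2\lambda^2)}$, with all weights in $[0,1]$.
\end{enumerate}
Without this explicit identity, or an equivalent one, your proposal does not prove the lemma.
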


We first recall the following monotonicity property: if
$1\leq\alpha<\beta\leq N$, then $\alpha$-positivity implies
$\beta$-positivity, and the same statement holds for nonnegativity. 
This follows immediately from the definitions of $\alpha$-positivity
and $\alpha$-nonnegativity; see also the case $\theta=0$ of
\cite[Proposition 2.9]{Li25}.
% This
% follows immediately from the definition of fractional positivity; see also
% the case $\theta=0$ of \cite[Proposition 2.9]{Li25}. 
In particular,
$\frac{10}{3}$-positivity and $\frac{10}{3}$-nonnegativity are weaker than
three-positivity and three-nonnegativity, respectively, but stronger than the
corresponding $4\frac12$-positive and $4\frac12$-nonnegative conditions.

\begin{remark}\label{rem:comparison}
We record the relation between our results and some earlier results.
\begin{enumerate}
\item
Li proved in \cite[Proposition 4.1(4)]{Li21} that three-positivity
implies PIC1 and that three-nonnegativity implies NIC1.
Lemma \ref{lem:main-algebraic} extends these implications to the
weaker $\frac{10}{3}$-positive and $\frac{10}{3}$-nonnegative
conditions, respectively.

\item
At the global level, Theorem \ref{thm:main} extends the results under
three-positivity \cite[Theorem 1.6]{Li21} and three-nonnegativity
\cite[Corollary, p. 1644]{NPW23} to their $\frac{10}{3}$ counterparts.
Our result is also related to
\cite[Theorems 1.3 and 1.4]{Li22JGA}. Li proved that the weaker
$4\frac12$-positive condition yields a homeomorphism to a spherical
space form, which can be upgraded to a diffeomorphism when $n=4$ or
$n\geq12$. Under the stronger $\frac{10}{3}$-positive condition, we
obtain a diffeomorphism in every dimension $n\geq4$, including
$5\leq n\leq11$. In the nonnegative case, our stronger condition
yields the more restrictive flat-or-spherical classification.

\end{enumerate}
\end{remark}
The paper is organized as follows. In Section 2, we recall the curvature
operator of the second kind,  $\alpha$-positivity, and the PIC1 and
NIC1 curvature conditions. 
In Section 3, we prove Lemma \ref{lem:main-algebraic}, derive Theorem \ref{thm:main} from existing classification and rigidity results, and give a sharpness example for the algebraic implication.

\section{Preliminaries}
In this section, we introduce the notation used in the paper and recall some basic facts about the curvature operator of the second kind. We also establish a weighted variational inequality and review the NIC1 and PIC1 curvature conditions.

\subsection{The curvature operator of the second kind}

Let $(V, g)$ be an $n$-dimensional Euclidean vector space, and
let $\{e_1, \ldots, e_n\}$ be an orthonormal basis of $V$ with respect to $g$. We identify $V$ with
$V^*$ by means of $g$. Let $S^2 (V)$ denote the space of symmetric two-tensors on $V$, and let $\Lambda^2(V)$ denote the space of two-forms on $V$. 
% Denote by $S^2 (V)$ and $\Lambda^2(V)$ the spaces of symmetric two-tensors and two-forms on $V$, respectively. 
For $e_i, e_j\in V$, the symmetric product $\odot$ and the wedge product $\wedge$ are defined by $e_i\odot e_j=e_i\otimes e_j+e_j\otimes e_i$ and
$e_i\wedge e_j=e_i\otimes e_j-e_j\otimes e_i$, respectively. 
We define inner products on $S^2(V)$ and $\Lambda^2(V)$ by
% We equip $S^2(V)$ and $\Lambda^2(V)$ with the inner products
\[
\langle A, B\rangle_{S^2(V)}=\tr(A^TB),
\qquad
\langle A, B\rangle_{\Lambda^2(V)}=\frac12\tr(A^TB).
\]
Consequently, $\left\{\frac1{\sqrt2}e_i\odot e_j:1\leq i<j\leq n\right\}
\cup
\left\{\frac12e_i\odot e_i:1\leq i\leq n\right\}$
% \[
% \left\{\frac1{\sqrt2}e_i\odot e_j:1\leq i<j\leq n\right\}
% \cup
% \left\{\frac12e_i\odot e_i:1\leq i\leq n\right\}
% \]
is an orthonormal basis of $S^2(V)$. In particular,
$|e_i\odot e_j|^2=2$ for $i\neq j$, whereas
$|e_i\odot e_i|^2=4$. The space $S^2(V)$ splits into $O(V)$-irreducible subspaces as
\[S^2(V)=S_0^2(V)\oplus\mathbb Rg,\]
where $S_0^2(V)=\{h\in S^2(V):\tr h=0\}$. Denote by $N$ the dimension of $S_0^2(V)$, that is 
% Let $S_0^2(V)=\{h\in S^2(V):\tr h=0\}$. Then
% $S^2(V)=S_0^2(V)\oplus\mathbb Rg$ and
\[
N:=\dim S_0^2(V)=\frac{(n-1)(n+2)}2.
\]

Let $R$ be an algebraic curvature tensor on $V$. We use
the convention $R_{ijkl}=R(e_i, e_j, e_k, e_l)$, with
$R_{ijij}>0$ on the round sphere. The curvature symmetries are
$R_{ijkl}=-R_{jikl}=-R_{ijlk}=R_{klij}$, and the first Bianchi identity
takes the form
\begin{equation}\label{eq:Bianchi}
R_{ijkl}+R_{iklj}+R_{iljk}=0.
\end{equation}
%We write $K_{ij}:=R_{ijij}$.
The algebraic curvature tensor $R$ induces a self-adjoint map
$\bar R:S^2(V)\to S^2(V)$ defined by
\begin{equation}\label{eq:Rbar-action}
\bar R(e_i\odot e_j)
:=\sum_{k, l=1}^nR_{iklj}e_k\odot e_l.
\end{equation}
Let $\pi:S^2(V)\to S_0^2(V)$ be the orthogonal projection given by
$\pi(h)=h-\frac{\tr h}{n}g$.

\begin{definition}\label{def:second-kind}
Following Nishikawa \cite{Nishikawa86}, we call the self-adjoint operator
\[
\mathring R:=\pi\circ\bar R|_{S_0^2(V)}:
S_0^2(V)\rightarrow S_0^2(V)
\]
the curvature operator of the second kind.
% The curvature operator of the second kind is the self-adjoint operator
% \[
% \mathring R:=\pi\circ\overline R:
% S_0^2(V)\rightarrow S_0^2(V).
% \]
We use the same notation for its associated symmetric bilinear form, namely
$\mathring R(\varphi, \psi)
=\langle\mathring R\varphi, \psi\rangle$.
\end{definition}
If
$\varphi=\sum_{i, l=1}^n\varphi_{il}e_i\otimes e_l$ and
$\psi=\sum_{j, k=1}^n\psi_{jk}e_j\otimes e_k$ belong to $S_0^2(V)$, then
\begin{equation}\label{eq:R-component}
\mathring R(\varphi, \psi)
=\sum_{i, j, k, l=1}^n
R_{ijkl}\varphi_{il}\psi_{jk}.
\end{equation}
Indeed, the projection does not affect the pairing with $\psi$ because
$\psi$ is orthogonal to $g$.

Let $\lambda_1\leq\cdots\leq\lambda_N$ be the eigenvalues of
$\mathring R$. For $\alpha=k+t\in[1, N)$, where
$k=\lfloor\alpha\rfloor$ and $0\leq t<1$, define
\begin{equation}\label{eq:fractional-sum}
\sum_{j=1}^{\alpha}\lambda_j
:=\lambda_1+\cdots+\lambda_k+t\lambda_{k+1}.
\end{equation}
The operator $\mathring R$ is called $\alpha$-positive or
$\alpha$-nonnegative if the quantity in
\eqref{eq:fractional-sum} is positive or nonnegative, respectively.
For $\alpha=N$, this quantity is understood to be
$\sum_{j=1}^N\lambda_j$. When $\alpha=k$ is an integer, this reduces
to the usual $k$-positivity. In particular, $\frac{10}{3}$-positivity (respectively, $\frac{10}{3}$-nonnegativity) means
\[
\lambda_1+\lambda_2+\lambda_3+\frac13\lambda_4>0 \quad (\text{respectively}, \ge 0).
\]

We shall use the following weighted variational inequality.

\begin{lemma}\label{lem:ordered-quadruple}
Let $\xi_1, \xi_2, \xi_3, \xi_4\in S_0^2(V)$ be mutually orthogonal unit
tensors. Then
\begin{equation}\label{eq:ordered-quadruple}
\sum_{a=1}^3\mathring R(\xi_a, \xi_a)
+\frac13\mathring R(\xi_4, \xi_4)
\geq
\lambda_1+\lambda_2+\lambda_3+\frac13\lambda_4.
\end{equation}
\end{lemma}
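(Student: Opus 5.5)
We plan to reduce the weighted inequality to Ky Fan's principle, which says that for a self-adjoint operator and any $m$ orthonormal vectors the sum of the Rayleigh quotients is at least the sum of the $m$ smallest eigenvalues. The weights $1,1,1,\frac13$ are not all equal, so we cannot apply this directly. Instead we write the weighted sum as a convex combination of unweighted sums over orthonormal families:
\[
\sum_{a=1}^3\mathring R(\xi_a,\xi_a)+\frac13\mathring R(\xi_4,\xi_4)
=\frac23\sum_{a=1}^3\mathring R(\xi_a,\xi_a)
+\frac13\sum_{a=1}^4\mathring R(\xi_a,\xi_a).
\]
To check this, compare coefficients. Each of $\xi_1,\xi_2,\xi_3$ receives $\frac23+\frac13=1$, and $\xi_4$ receives $\frac13$.

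Next we apply Ky Fan's inequality to each piece. Since $\xi_1,\xi_2,\xi_3$ are orthonormal in $S_0^2(V)$, we get $\sum_{a=1}^3\mathring R(\xi_a,\xi_a)\ge\lambda_1+\lambda_2+\lambda_3$. Since $\xi_1,\dots,\xi_4$ are orthonormal, we get $\sum_{a=1}^4\mathring R(\xi_a,\xi_a)\ge\lambda_1+\cdots+\lambda_4$. Combining these with weights $\frac23$ and $\frac13$ gives
\[
\frac23(\lambda_1+\lambda_2+\lambda_3)+\frac13(\lambda_1+\lambda_2+\lambda_3+\lambda_4)=\lambda_1+\lambda_2+\lambda_3+\frac13\lambda_4,
\]
which is exactly \eqref{eq:ordered-quadruple}. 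This step uses $N\ge4$, which holds because $n\ge4$ gives $N\ge9$.

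The only substantive ingredient is Ky Fan's minimum principle, and we would include a short proof of it for completeness. Diagonalize $\mathring R$ in an orthonormal eigenbasis $\{\eta_j\}$. Set $c_j=\sum_{a=1}^m\langle\xi_a,\eta_j\rangle^2$. By Bessel's inequality $0\le c_j\le1$, and $\sum_j c_j=m$. Then
\[
\sum_a\mathring R(\xi_a,\xi_a)=\sum_j c_j\lambda_j,
\]
and minimizing a linear function over this polytope puts weight $1$ on the $m$ smallest eigenvalues, giving the bound $\lambda_1+\cdots+\lambda_m$. There is no real obstacle. The only point requiring care is choosing the convex decomposition with nonnegative coefficients, which works because the weights $1,1,1,\frac13$ are nonincreasing.
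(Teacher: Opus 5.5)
Your proof is correct. It uses the same ingredient as the paper's proof, organized differently.

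The paper runs the Bessel--Parseval argument once, directly with fractional weights. It sets $p_i=\sum_{a=1}^3\langle\xi_a,\omega_i\rangle^2+\frac13\langle\xi_4,\omega_i\rangle^2$ and notes $0\le p_i\le 1$ and $\sum_i p_i=\frac{10}{3}$. It then minimizes $\sum_i\lambda_i p_i$ over that polytope, with the minimum attained at $p_1=p_2=p_3=1$, $p_4=\frac13$.

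You first write the weighted sum as $\frac23$ times the three-term sum plus $\frac13$ times the four-term sum. You then invoke the integer Ky Fan inequality twice, proving it by the same polytope argument with integer total $m$.

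Your route shows that the fractional inequality is a formal consequence of the classical integer case. The same Abel-summation decomposition handles any nonincreasing weight vector. The paper's route is a single step, doing the greedy minimization directly for the non-integer total $\frac{10}{3}$.

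One small point: you do not need $n\geq 4$ to ensure $N\geq 4$. The existence of four mutually orthogonal unit tensors in $S_0^2(V)$ already forces it.
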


\begin{proof}
Choose an orthonormal eigenbasis
$\{\omega_1, \ldots, \omega_N\}$ of $S_0^2(V)$ satisfying
$\mathring R\omega_i=\lambda_i\omega_i$, and set
\[
p_i:=\sum_{a=1}^3\langle\xi_a, \omega_i\rangle^2
+\frac13\langle\xi_4, \omega_i\rangle^2.
\]
Since $\xi_1, \ldots, \xi_4$ are orthonormal, Bessel's inequality gives   
\[
0\leq p_i
\leq\sum_{a=1}^4\langle\xi_a, \omega_i\rangle^2
\leq1.
\]
By Parseval's identity, $\sum_{i=1}^Np_i=3+\frac13=\frac{10}{3}$.
Moreover,
\[
\sum_{a=1}^3\mathring R(\xi_a, \xi_a)
+\frac13\mathring R(\xi_4, \xi_4)
=\sum_{i=1}^N\lambda_ip_i.
\]
Because $\lambda_1\leq\cdots\leq\lambda_N$, this weighted sum is
minimized, under the conditions $0\leq p_i\leq1$ and
$\sum_{i=1}^{N} p_i=\frac{10}{3}$, by taking
\[
p_1=p_2=p_3=1,\qquad p_4=\frac13,
\qquad p_i=0\quad(i\geq5).
\]
This proves \eqref{eq:ordered-quadruple}.
\end{proof}

\subsection{The PIC1 and NIC1 curvature conditions}

For the definitions of NIC1 and PIC1, we follow \cite[Proposition 4]{Brendle08}, \cite[Definition 2.4]{CGT23}, and \cite[Definition 4.1]{Li21}.
% We use the following convention; see
% \cite[Proposition 4]{Brendle08},
% \cite[Definition 2.4]{CGT23}, and
% \cite[Definition 4.1]{Li21}.

\begin{definition}\label{def:PIC1}
Let $R$ be an algebraic curvature tensor on a Euclidean vector
space $V$ with $\dim V\geq4$. We say that $R$ satisfies \emph{NIC1}
if
\begin{equation}\label{eq:PIC1}
R_{1313}+\lambda^2R_{1414}
+R_{2323}+\lambda^2R_{2424}
-2\lambda R_{1234}\geq0
\end{equation}
for every orthonormal four-frame
$\{e_1, e_2, e_3, e_4\}\subset V$ and every
$\lambda\in[-1, 1]$. If the inequality is strict for every such
four-frame and every $\lambda\in[-1, 1]$, then $R$ satisfies
\emph{PIC1}.
\end{definition}

The conditions NIC1 and PIC1 are also called weakly PIC1 and strictly
PIC1, respectively. By \cite[Proposition 4]{Brendle08}, NIC1 is
equivalent to requiring that the induced curvature tensor on
$V\times\mathbb R$ have nonnegative isotropic curvature. The strict
version gives the corresponding equivalence between PIC1 and positive
isotropic curvature on $V\times\mathbb R$.

Taking $\lambda=1$ in \eqref{eq:PIC1} shows that NIC1 implies
nonnegative isotropic curvature on $V$.
NIC1 also implies nonnegative Ricci curvature; see
\cite[Section 2]{Topping24}.
Indeed, suppose that $R$ is NIC1. Fix a unit vector $v$ and choose
an orthonormal basis $\{e_1, \ldots, e_{n-1}\}$ of $v^\perp$.
For each $i<j$, extend $e_i, e_j, v$ to an orthonormal four-frame.
Taking $\lambda=0$ in \eqref{eq:PIC1} gives
\[
R(e_i, v, e_i, v)+R(e_j, v, e_j, v)\geq0.
\]
Summing over all pairs $i<j$, we obtain
\[
(n-2)\Ric(v, v)\geq0.
\]
Thus $\Ric\geq0$. The same argument with strict inequalities
shows that PIC1 implies $\Ric>0$.

% We shall use two global consequences of these curvature conditions.
% By Brendle's convergence theorem \cite[Theorem 2]{Brendle08}, if a closed manifold of dimension at least four has an initial metric satisfying PIC1, then its normalized Ricci flow converges to a metric of constant positive sectional curvature.

% By Brendle's convergence theorem
% \cite[Theorem 2]{Brendle08}, the normalized Ricci flow on a closed
% manifold of dimension at least four with a PIC1 initial metric
% converges to a metric of constant positive sectional curvature. 
% We shall also use the classification of closed simply connected
% irreducible NIC1 manifolds
% \cite[Theorem 9.33]{Brendle10}; see also
% \cite[Section 7]{Li21}. Such a manifold is diffeomorphic to a sphere,
% is K\"ahler, or is isometric to a compact irreducible symmetric
% space.

% The second consequence is the classification of closed simply connected irreducible NIC1 manifolds \cite[Theorem 9.33]{Brendle10}, see also \cite[Section 7]{Li21}. Such a manifold is either diffeomorphic to a sphere, Kähler, or isometric to a compact irreducible symmetric space.

We shall use two global consequences of these curvature conditions. The first is Brendle's convergence theorem \cite[Theorem 2]{Brendle08}, which says that if a closed manifold of dimension at least four has an initial metric satisfying PIC1, then its normalized Ricci flow converges to a metric of constant positive sectional curvature. The second is the classification of closed simply connected irreducible NIC1 manifolds \cite[Theorem 9.33]{Brendle10}; see also \cite[Section 7]{Li21}. Such a manifold is diffeomorphic to a sphere, is K\"ahler, or is
isometric to a compact irreducible symmetric space.

\section{Proof of the main theorem}

In this section, we first prove Lemma \ref{lem:main-algebraic}, then establish
Theorem \ref{thm:main}, and finally provide an example showing that the constant $\frac{10}{3}$ is optimal for deducing PIC1 from
$\alpha$-positivity.

\subsection{Proof of Lemma \ref{lem:main-algebraic}}

% We now prove Lemma \ref{lem:main-algebraic}. Our argument is based on the
% same general idea as \cite[Theorem 1.4]{CGT23} and
% \cite[Lemma 4.1]{Li21}: we construct suitable orthonormal traceless
% symmetric two-tensors and apply the curvature condition to the resulting
% curvature sums. 
% However, the coefficient $\frac13$ in the
% $\frac{10}{3}$-positivity condition requires a more delicate choice of
% tensors and a careful matching of coefficients.
We now prove Lemma \ref{lem:main-algebraic}. The proof follows the same general strategy as in \cite[Theorem 1.4]{CGT23} and \cite[Lemma 4.1]{Li21}. We construct a suitable orthonormal family of traceless symmetric two-tensors and apply the curvature condition to the resulting
curvature sums. 
The new difficulty is the coefficient $\frac13$ in the $\frac{10}{3}$-positivity assumption, which forces a more delicate choice of tensors and a careful matching of coefficients.
\begin{proof}[Proof of Lemma \ref{lem:main-algebraic}]
Fix an orthonormal four-frame $\{e_1, e_2, e_3, e_4\}$ and extend it to an orthonormal basis of $V$. 
For $\lambda\in\mathbb R$, set
\[
\begin{aligned}
I(e_1, e_2, e_3, e_4; \lambda)
:={}&R_{1313}+\lambda^2R_{1414}
+R_{2323}+\lambda^2R_{2424}
-2\lambda R_{1234}.
\end{aligned}
\]
Since
\begin{equation}\label{eq:change-sign-lambda}
I(e_1, e_2, e_3, e_4; \lambda)
=
I(e_1, e_2, e_3, -e_4; -\lambda),
\end{equation}
it suffices to consider $\lambda\in[0,1]$, which we assume throughout
the remainder of the proof.

For $1\leq i, j\leq n$, set
\[
E_{ij}:=\frac{1}{\sqrt{2}}e_i\odot e_j.
\]
Notice that $E_{ij}\in S_0^2(V)$ and $|E_{ij}|=1$ whenever
$i\neq j$. We also define
\[
H_1:=\frac{1}{\sqrt{6}}
\left(
e_1\otimes e_1+e_2\otimes e_2-2e_3\otimes e_3
\right).
\]
We first record the curvature identities needed in the proof. By \eqref{eq:R-component}, if $i\neq j$ and $k\neq l$, then
\[
\Rtwo(E_{ij}, E_{kl})
=R_{iklj}+R_{ilkj}.
\]
In particular, if $i\neq j$, then
\begin{equation}\label{eq:Eij-sectional}
\Rtwo(E_{ij}, E_{ij})=R_{ijij}.
\end{equation}
Moreover, the first Bianchi identity gives
\begin{align}\label{eq:mixed-bianchi}
\begin{aligned}
\Rtwo(E_{13}, E_{24}) - \Rtwo(E_{14}, E_{23})
&= (R_{1243}+R_{1423}) - (R_{1234}+R_{1324})\\
&=-2R_{1234}+R_{1342}+R_{1423} 
=-3R_{1234}.
\end{aligned}
\end{align}
For the diagonal tensor $H_1$, a direct calculation gives
\begin{align*}
\Rtwo(H_1, H_1)
&=-2\left(
\frac16R_{1212}
-\frac13R_{1313}
-\frac13R_{2323}
\right)
=-\frac13R_{1212}
+\frac23R_{1313}
+\frac23R_{2323}.
\end{align*}
Together with $\Rtwo(E_{12}, E_{12})=R_{1212}$,  this gives
\begin{equation}\label{eq:H1-E12-cancellation}
\Rtwo(H_1, H_1)
+\frac13\Rtwo(E_{12}, E_{12})
=
\frac23(R_{1313}+R_{2323}).
\end{equation}

We now introduce the coefficients that will be determined later. Let
$a, b\geq0$ satisfy $a^2+b^2=1$,
and define
\[
\begin{aligned}
\varphi_1&=aE_{13}+bE_{24},
&
\psi_1&=-bE_{13}+aE_{24},\\
\varphi_2&=bE_{14}-aE_{23},
&
\psi_2&=aE_{14}+bE_{23}.
\end{aligned}
\]
Since $a^2+b^2=1$, the six tensors
\[
\varphi_1, \quad \varphi_2, \quad
\psi_1, \quad \psi_2, \quad H_1, \quad E_{12}
\]
are mutually orthogonal unit tensors in $S_0^2(V)$.
Let $\beta, d\geq0$ be parameters to be determined. By bilinearity and
\eqref{eq:Eij-sectional}, we have
\begin{align*}
\Rtwo(\varphi_1, \varphi_1)
&=a^2R_{1313}+b^2R_{2424}
+2ab\Rtwo(E_{13}, E_{24}),\\
\Rtwo(\psi_1, \psi_1)
&=b^2R_{1313}+a^2R_{2424}
-2ab\Rtwo(E_{13}, E_{24}),\\
\Rtwo(\varphi_2, \varphi_2)
&=b^2R_{1414}+a^2R_{2323}
-2ab\Rtwo(E_{14}, E_{23}),\\
\Rtwo(\psi_2, \psi_2)
&=a^2R_{1414}+b^2R_{2323}
+2ab\Rtwo(E_{14}, E_{23}).
\end{align*}
Combining these four identities and using
\eqref{eq:mixed-bianchi}, we obtain
\begin{align*}
&\Rtwo(\varphi_1, \varphi_1)
+\Rtwo(\varphi_2, \varphi_2)
+\beta\Rtwo(\psi_1, \psi_1)
+\beta\Rtwo(\psi_2, \psi_2)\\
&=(a^2+\beta b^2)(R_{1313}+R_{2323}) 
+ (b^2+\beta a^2)(R_{1414}+R_{2424}) -6(1-\beta)abR_{1234}.
\end{align*}
Adding 
$d$ times \eqref{eq:H1-E12-cancellation} to the above identity, we obtain
\begin{align}
\begin{aligned}
&\Rtwo(\varphi_1, \varphi_1)
+\Rtwo(\varphi_2, \varphi_2)
+\beta\Rtwo(\psi_1, \psi_1)
+\beta\Rtwo(\psi_2, \psi_2)\\
&\quad +d\Rtwo(H_1, H_1)
+\frac d3\Rtwo(E_{12}, E_{12})\\
&=
\left(a^2+\beta b^2+\frac{2d}{3}\right)
(R_{1313}+R_{2323})
+(b^2+\beta a^2)(R_{1414}+R_{2424})\\
&\quad-6(1-\beta)abR_{1234}.  
\end{aligned}
\label{eq:undetermined-curvature-sum}
\end{align}

Set $\Lambda = \lambda_1+\lambda_2+\lambda_3+\frac13\lambda_4$.
Applying Lemma \ref{lem:ordered-quadruple} successively to
\[
(\varphi_1, \varphi_2, \psi_1, \psi_2), \qquad
(\varphi_1, \varphi_2, \psi_2, \psi_1), \qquad
(\varphi_1, \varphi_2, H_1, E_{12}),
\]
we obtain
\begin{align}\label{eq:ordered-sum}
\begin{aligned}
\Rtwo(\varphi_1, \varphi_1)
+\Rtwo(\varphi_2, \varphi_2)
+\Rtwo(\psi_1, \psi_1)
+\frac13\Rtwo(\psi_2, \psi_2)
&\geq\Lambda,\\
\Rtwo(\varphi_1, \varphi_1)
+\Rtwo(\varphi_2, \varphi_2)
+\Rtwo(\psi_2, \psi_2)
+\frac13\Rtwo(\psi_1, \psi_1)
&\geq\Lambda,\\
\Rtwo(\varphi_1, \varphi_1)
+\Rtwo(\varphi_2, \varphi_2)
+\Rtwo(H_1, H_1)
+\frac13\Rtwo(E_{12}, E_{12})
&\geq\Lambda.
\end{aligned}
\end{align}
Suppose that $d+\frac32\beta=1$.
% \begin{equation}\label{eq:beta-d-relation}
% d+\frac32\beta=1.
% \end{equation}
Multiplying the first two inequalities in \eqref{eq:ordered-sum} by $\frac{3\beta}{4}$, multiplying the third
inequality by $d$, and then adding them, we obtain
\begin{align}\label{eq:weighted-lower-bound}
\begin{aligned}
&\Rtwo(\varphi_1, \varphi_1)
+\Rtwo(\varphi_2, \varphi_2)
+\beta\Rtwo(\psi_1, \psi_1)
+\beta\Rtwo(\psi_2, \psi_2)\\
&\quad
+d\Rtwo(H_1, H_1)
+\frac d3\Rtwo(E_{12}, E_{12})
\geq\Lambda.
\end{aligned}
\end{align}
Here the coefficient of $\Lambda$ on the
right-hand side is $\frac{3\beta}{4}+\frac{3\beta}{4}+d
=\frac32\beta+d=1$.

It remains to determine $a, b, \beta$, and $d$. We require the right-hand
side of \eqref{eq:undetermined-curvature-sum} to be
\[
c\,I(e_1, e_2, e_3, e_4; \lambda)=
c\left(
R_{1313}+\lambda^2R_{1414}
+R_{2323}+\lambda^2R_{2424}
-2\lambda R_{1234}
\right)
\]
for some $c>0$. Comparing the coefficients gives
\begin{equation}\label{eq:coefficient-system}
\left\{
\begin{aligned}
a^2+\beta b^2+\frac{2d}{3}&=c,\\
b^2+\beta a^2&=c\lambda^2,\\
(1-\beta)ab&=\frac{c\lambda}{3},
\end{aligned}
\right.
\end{equation}
together with $a^2+b^2=1$ and  $d+\frac32\beta=1$.
Using $a^2+b^2=1$, the second equation in \eqref{eq:coefficient-system} can be rewritten as
\begin{equation}\label{eq:beta-a-relation}
(1-\beta)a^2=1-c\lambda^2.
\end{equation}
Moreover, the left-hand side of the first equation in \eqref{eq:coefficient-system} becomes
\begin{align*}
a^2+\beta b^2+\frac{2d}{3}
=a^2+\beta(1-a^2)+\frac23-\beta=\frac23+(1-\beta)a^2 =\frac53-c\lambda^2,
\end{align*}
where the last equality follows from
\eqref{eq:beta-a-relation}. Hence
\begin{equation}\label{eq:c-value}
c=\frac5{3(1+\lambda^2)}.
\end{equation}
One can verify that
\begin{equation}\label{eq:one-minus-c-lambda}
1-c\lambda^2
=
\frac{3-2\lambda^2}{3(1+\lambda^2)}>0.
\end{equation}
Dividing the third equation in \eqref{eq:coefficient-system} by \eqref{eq:beta-a-relation}, we obtain $\frac ba = \frac{c\lambda}{3(1-c\lambda^2)}$.
Combining this with $a^2+b^2=1$, and using \eqref{eq:c-value}, we get
\begin{align*}
a^2
&=
\frac{9(1-c\lambda^2)^2}
{9(1-c\lambda^2)^2+c^2\lambda^2}
=
\frac{9(3-2\lambda^2)^2}
{9(3-2\lambda^2)^2+25\lambda^2}, \\
b^2
&=
\frac{c^2\lambda^2}
{9(1-c\lambda^2)^2+c^2\lambda^2}
=
\frac{25\lambda^2}
{9(3-2\lambda^2)^2+25\lambda^2}.
\end{align*}
Since $\lambda\in[0, 1]$, we may take
\begin{equation}\label{eq:ab-values}
a=
\frac{3(3-2\lambda^2)}
{\sqrt{9(3-2\lambda^2)^2+25\lambda^2}},
\qquad
b=
\frac{5\lambda}
{\sqrt{9(3-2\lambda^2)^2+25\lambda^2}}.
\end{equation}
It remains to determine $\beta$ and $d$. By
\eqref{eq:beta-a-relation}, \eqref{eq:one-minus-c-lambda}, and
\eqref{eq:ab-values},
\begin{align*}
1-\beta
=\frac{1-c\lambda^2}{a^2}=
\frac{9(3-2\lambda^2)^2+25\lambda^2}
{27(1+\lambda^2)(3-2\lambda^2)}.
\end{align*}
It follows that
\begin{equation*}%\label{eq:beta-value}
\beta
=
\frac{10\lambda^2(11-9\lambda^2)}
{27(1+\lambda^2)(3-2\lambda^2)}, 
\qquad 
d=1-\frac32\beta=\frac{27\lambda^4-46\lambda^2+27}
{9(1+\lambda^2)(3-2\lambda^2)}.
\end{equation*}
% Finally, using $d=1-\frac32\beta$, we obtain
% \begin{equation*}\label{eq:d-value}
% d
% =
% \frac{27\lambda^4-46\lambda^2+27}
% {9(1+\lambda^2)(3-2\lambda^2)}.
% \end{equation*}
For $\lambda\in[0, 1]$, a direct calculation shows that all the
denominators above are positive and that $\beta\geq0, d>0$.
Thus, all the weights used above are nonnegative.

Substituting these coefficients into
\eqref{eq:undetermined-curvature-sum}, we obtain
\begin{align}\label{eq:final-PIC1-identity}
\begin{aligned}
&\Rtwo(\varphi_1, \varphi_1)
+\Rtwo(\varphi_2, \varphi_2)
+\beta\Rtwo(\psi_1, \psi_1)
+\beta\Rtwo(\psi_2, \psi_2)\\
&\quad 
+d\Rtwo(H_1, H_1)
+\frac d3\Rtwo(E_{12}, E_{12})
=
\frac5{3(1+\lambda^2)}
I(e_1, e_2, e_3, e_4; \lambda).
\end{aligned}
\end{align}
Combining \eqref{eq:weighted-lower-bound} with
\eqref{eq:final-PIC1-identity} gives
\begin{equation}\label{eq:PIC1-lower-bound}
\frac5{3(1+\lambda^2)}
I(e_1, e_2, e_3, e_4; \lambda)
\geq\Lambda, \quad \lambda \in [0, 1].
\end{equation}
By \eqref{eq:change-sign-lambda}, the same inequality holds for every
$\lambda\in[-1, 1]$. Since $\Lambda>0$ in the
$\frac{10}{3}$-positive case and $\Lambda\geq0$ in the
$\frac{10}{3}$-nonnegative case, $R$ is PIC1 in the first case and
NIC1 in the second. 
\end{proof}

\begin{remark}\label{rem:lambda-range}
Although Definition \ref{def:PIC1} uses
$\lambda\in[-1,1]$, the curvature inequalities obtained in
Lemma \ref{lem:main-algebraic} hold for every $\lambda\in\mathbb R$.
Indeed, for $\lambda\neq0$,
\[
I(e_1, e_2, e_3, e_4;\lambda)
=
\lambda^2
I\left(e_1, e_2, e_4, -e_3;\frac1\lambda\right).
\]
Thus, the parameter range $[-1, 1]$ in these inequalities may be
replaced by $\mathbb R$.
\end{remark}

\subsection{Proof of Theorem \ref{thm:main}}

The positive case follows from Lemma \ref{lem:main-algebraic} and
Brendle's convergence theorem, as in \cite[Theorem 1.2]{CGT23} and
\cite[Theorem 1.6]{Li21}. For the nonnegative case, we follow
\cite[Theorem 1.7]{Li21} and use \cite[Theorem A]{NPWW23} for the
symmetric-space case.

\begin{proof}[Proof of Theorem \ref{thm:main}]
(1) Suppose first that $(M, g)$ has $\frac{10}{3}$-positive curvature
operator of the second kind. By Lemma \ref{lem:main-algebraic},
$(M, g)$ is PIC1. Brendle's convergence theorem
\cite[Theorem 2]{Brendle08} then implies that $M$ is diffeomorphic to a
spherical space form; see also \cite{CGT23, Li21}.

(2) 
Suppose that $(M, g)$ is nonflat, and let $(\widetilde M, \widetilde g)$ be its universal Riemannian cover.
Then $(\widetilde M, \widetilde g)$ is also nonflat and has
$\frac{10}{3}$-nonnegative curvature operator of the second kind. By Lemma \ref{lem:main-algebraic}, it is NIC1 and hence has nonnegative Ricci curvature.
The Cheeger--Gromoll splitting theorem \cite{CG71} implies $\widetilde M$ is isometric to a product of the form $N_0^{n-k}\times \R^k$, where $N_0$ is compact. 
The monotonicity of the $\alpha$-nonnegative condition shows that
$\widetilde M$ has $n$-nonnegative curvature operator of the second kind. It follows from \cite[Theorem 1.8]{Li21} that $\widetilde M$ is locally irreducible. Hence the above splitting
forces $k=0$, so $\widetilde M$ is compact and irreducible.

% By the monotonicity of the $\alpha$-nonnegative condition,
% $\widetilde M$ has $n$-nonnegative curvature operator of the second
% kind.
% Due to \cite[Theorem 1.8]{Li21}, $\widetilde M$ is locally irreducible, which implies $k=0$. Hence
% $\widetilde M$ is compact and irreducible.
%Since $\widetilde M$ is complete and simply connected, the de Rham decomposition theorem shows that its local irreducibility implies its irreducibility; see \cite[Chapter~IV, Theorem~6.2]{KN63}.

By the classification of closed simply connected irreducible NIC1
manifolds (see \cite[Theorem 9.33]{Brendle10} or
\cite[Section 7]{Li21}), one of the following holds:
$\widetilde M$ is diffeomorphic to $\mathbb S^n$, is K\"ahler, or is
isometric to a compact irreducible symmetric space.

We now consider the latter two possibilities.
The K\"ahler case is impossible. Indeed, by monotonicity, $\widetilde M$ has $4$-nonnegative curvature operator of the second kind and hence must be flat by \cite[Theorem 1.9]{Li21}, contradicting our assumption.
Suppose that $\widetilde M$ is a compact irreducible symmetric space.
Then it is locally symmetric and has $n$-nonnegative curvature
operator of the second kind. By \cite[Theorem A]{NPWW23},
$\widetilde M$ has constant sectional curvature, say $c$. In this
case, $\Rtwo=c\,\operatorname{Id}$ on
$S_0^2(T\widetilde M)$, so its $\frac{10}{3}$-nonnegativity implies
$\frac{10}{3}c\geq0$. Since $\widetilde M$ is nonflat, $c>0$.
Therefore, $\widetilde M$ is isometric, up to scaling, to the round sphere.

Consequently, the only nonflat possibility is that $M$ is
diffeomorphic to a spherical space form. This completes the proof.
\end{proof}

\subsection{Optimality of
\texorpdfstring{$\frac{10}{3}$}{10/3} for deducing PIC1}

% We conclude with an example showing that no value
% $\alpha>\frac{10}{3}$ can replace $\frac{10}{3}$ in the implication
% from $\alpha$-positivity to PIC1.

We conclude with an example showing that $\frac{10}{3}$ is the largest
value of $\alpha$ for which $\alpha$-positivity implies PIC1 in general.

\begin{example}
For every $n\geq4$, there exists an algebraic curvature tensor $R$ on
$\mathbb R^n$ satisfying the following properties:
\begin{enumerate}
\item $\Rtwo$ is $\frac{10}{3}$-nonnegative and
$\alpha$-positive for every
$\alpha\in(\frac{10}{3}, N]$;
\item $R$ is NIC1 but is not PIC1.
\end{enumerate}
Thus, $\alpha$-positivity does not imply PIC1 in general for any
$\alpha>\frac{10}{3}$.
\end{example}

\begin{proof}
Let $V=\mathbb R^n$ and let $\{e_1, \ldots, e_n\}$ be its standard orthonormal basis. Define an algebraic curvature tensor $R$ on $V$ by prescribing
\begin{align}\label{eqn:RiemannCurv}
\begin{aligned}
R_{1212}&=1, \quad R_{1313}=R_{2323}=0,\\
R_{3a3a}&=1, \quad R_{1a1a}=R_{2a2a}=\frac43,
&&4\leq a\leq n,\\
R_{abab}&=1,
&&4\leq a<b\leq n,
\end{aligned}
\end{align}
and setting all mixed curvature components between distinct coordinate
2-planes equal to zero. We then extend $R$ by the curvature
symmetries. The first Bianchi identity follows immediately from this
diagonal construction.

For $1\leq i<j\leq n$, set
\[
E_{ij}:=\frac1{\sqrt2}e_i\odot e_j.
\]
Then $E_{ij}\in S_0^2(V)$ and $|E_{ij}|=1$. By
\eqref{eq:R-component}, for $i<j$ and $k<l$, we have
\[
\Rtwo(E_{ij}, E_{kl})=R_{iklj}+R_{ilkj}.
\]
By \eqref{eqn:RiemannCurv} and the definition of the remaining
curvature components, it follows that
\[
\Rtwo(E_{ij}, E_{kl})=0
\quad\text{if }(i, j)\neq(k, l),
\qquad
\Rtwo(E_{ij}, E_{ij})=R_{ijij}.
\]
Moreover, $\Rtwo(E_{ij}, h)=0$ for every diagonal traceless symmetric
tensor $h$. Since the off-diagonal tensors $E_{kl}$ and the diagonal
traceless tensors together span $S_0^2(V)$, each $E_{ij}$ is an
eigentensor of $\Rtwo$ with eigenvalue $R_{ijij}$.
More precisely, $E_{13}$ and $E_{23}$ have eigenvalue zero, 
$E_{1a}$ and $E_{2a}$ have eigenvalue $\frac43$ for
$4\leq a\leq n$, and
\[
E_{12},\qquad E_{3a}\ (4\leq a\leq n),\qquad
E_{ab}\ (4\leq a<b\leq n)
\]
have eigenvalue $1$.

We have determined all the eigenvalues of $\Rtwo$ on the off-diagonal
symmetric tensors. It remains to consider the diagonal traceless
tensors, which form the subspace
\[
\mathcal D_0
:=
\left\{
\sum_{i=1}^n x_i e_i\otimes e_i:
\sum_{i=1}^n x_i=0
\right\}.
\]
If $h=\sum_{i=1}^{n} x_i e_i\otimes e_i$ and
$k=\sum_{i=1}^{n} y_i e_i\otimes e_i$ belong to $\mathcal D_0$, then
\begin{equation}\label{eq:sharp-diagonal-bilinear}
\Rtwo(h, k)
=
-\sum_{1\le i<j\le n}R_{ijij}(x_iy_j+x_jy_i).
\end{equation}

Set
\[
q_0:=
\frac1{\sqrt6}
\left(
e_1\otimes e_1+e_2\otimes e_2-2e_3\otimes e_3
\right).
\]
For an arbitrary
$h=\sum_{i=1}^{n} x_i e_i\otimes e_i\in\mathcal D_0$,
\eqref{eq:sharp-diagonal-bilinear} and
\eqref{eqn:RiemannCurv} give
\[
\begin{aligned}
\Rtwo(q_0, h)
&=-\frac1{\sqrt6}\Big[
R_{1212}(x_1+x_2)
+R_{1313}(x_3-2x_1)
+R_{2323}(x_3-2x_2)\\
&\quad+\sum_{j=4}^n
\big(
R_{1j1j}+R_{2j2j}-2R_{3j3j}
\big)x_j
\Big]\\
&=
\frac1{\sqrt6}
\left(
-x_1-x_2-\frac23\sum_{j=4}^n x_j
\right).
\end{aligned}
\]
Using $\sum_{j=4}^n x_j=-(x_1+x_2+x_3)$, we obtain
\[
\Rtwo(q_0,h)
=
\frac1{\sqrt6}
\left(
-\frac13x_1-\frac13x_2+\frac23x_3
\right)
=
-\frac13\langle q_0, h\rangle.
\]
This holds for every $h\in\mathcal D_0$. Moreover, if $\zeta$ is an
off-diagonal symmetric tensor, then
$\Rtwo(q_0, \zeta)=0$ and $\langle q_0, \zeta\rangle=0$. Hence
\[
\langle\Rtwo q_0, \xi\rangle
=
-\frac13\langle q_0, \xi\rangle
\]
for every $\xi\in S_0^2(V)$. Therefore,
$\Rtwo q_0=-\frac13q_0$, which shows that $q_0$ is an eigentensor with eigenvalue
$-\frac13$.

Similarly, let
\[
q_1:=
\frac1{\sqrt2}
\left(
e_1\otimes e_1-e_2\otimes e_2
\right),
\qquad
\eta:=
\sum_{a=4}^n z_a e_a\otimes e_a \quad \text{with} 
\sum_{a=4}^n z_a=0.
\]
For any $h=\sum_{i=1}^{n} x_i e_i\otimes e_i\in\mathcal D_0$, direct
substitution into \eqref{eq:sharp-diagonal-bilinear}, together with
$\sum_{a=4}^n z_a=0$, gives
\[
\Rtwo(q_1, h)
=
\frac1{\sqrt2}(x_1-x_2)
=
\langle q_1, h\rangle
\]
and
\[
\Rtwo(\eta, h)
=
-\sum_{a=4}^n z_a
\sum_{\substack{b=4\\b\neq a}}^n x_b
=
\sum_{a=4}^n z_ax_a
=
\langle\eta, h\rangle.
\]
Since diagonal and off-diagonal tensors have no mixed curvature terms,
$q_1$ and every nonzero $\eta$ are eigentensors of $\Rtwo$ with
eigenvalue $1$. Such tensors $\eta$ form an $(n-4)$-dimensional
subspace.

The mutually orthogonal eigenspaces found above have total dimension
$n-2$. Since $\dim\mathcal D_0=n-1$, one diagonal direction remains.
Define
\[
q_*:=
(n-3)\sum_{i=1}^3e_i\otimes e_i
-3\sum_{a=4}^ne_a\otimes e_a.
\]
The tensor $q_*$ is traceless. Moreover, a direct computation shows that it is orthogonal to $q_0$, $q_1$, and
all the tensors $\eta$. Hence it spans the remaining one-dimensional
subspace of $\mathcal D_0$.

For any $h=\sum_{i=1}^{n} x_i e_i\otimes e_i\in\mathcal D_0$, using
\eqref{eq:sharp-diagonal-bilinear} and
$\sum_{a=4}^n x_a=-(x_1+x_2+x_3)$, we obtain
\[
\begin{aligned}
\Rtwo(q_*, h)
&=
3(n-3)(x_1+x_2+x_3)
-\frac{2n+3}{3}\sum_{a=4}^n x_a\\
&=
\frac{11n-24}{3}(x_1+x_2+x_3).
\end{aligned}
\]
Moreover,
$\langle q_*, h\rangle=n(x_1+x_2+x_3)$. Therefore,
\[
\Rtwo(q_*, h)
=
\left(\frac{11}{3}-\frac8n\right)
\langle q_*, h\rangle.
\]
Since diagonal and off-diagonal tensors have no mixed curvature terms,
$q_*$ is an eigentensor with eigenvalue
$\frac{11}{3}-\frac8n>1$ for $n\geq4$.

Combining the diagonal and off-diagonal calculations, the four
smallest eigenvalues of $\Rtwo$ are
\[
\lambda_1=-\frac13,\qquad
\lambda_2=\lambda_3=0,\qquad
\lambda_4=1,
\]
and $\lambda_j\geq1$ for every $j\geq4$. Hence
\[
\lambda_1+\lambda_2+\lambda_3+\frac13\lambda_4=0,
\]
so $\Rtwo$ is $\frac{10}{3}$-nonnegative. If
$\frac{10}{3}<\alpha\leq4$, the sum defining $\alpha$-positivity is
$\alpha-\frac{10}{3}>0$. For $\alpha>4$, this sum remains positive
because all the additional eigenvalues are at least $1$. Thus,
$\Rtwo$ is $\alpha$-positive for every
$\alpha\in(\frac{10}{3}, N]$.

Finally, for the four-frame $\{e_1, e_2, e_3, e_4\}$ and the parameter
$\lambda=0$, the left-hand side of \eqref{eq:PIC1} is
\[
R_{1313}+R_{2323}=0.
\]
Thus, the strict PIC1 inequality fails, and $R$ is not PIC1. On the
other hand, the nonnegative statement in
Lemma \ref{lem:main-algebraic} implies that $R$ is NIC1.
\end{proof}

\begin{remark}
The example above shows that $\alpha$-positivity does not imply PIC1
for any $\alpha>\frac{10}{3}$. Therefore, an argument that first deduces PIC1 from $\alpha$-positivity cannot weaken the curvature assumption beyond $\frac{10}{3}$. 
Weaker curvature assumptions can still be treated by other methods; see \cite[Theorem 1.3]{Li22JGA}.
\end{remark}

\medskip

\noindent
\textbf{Acknowledgments}
The first author would like to express sincere gratitude to Professor Ying Zhang for valuable guidance, helpful suggestions, and continuous encouragement. 
The authors would like to thank Professor Xiaolong Li for suggesting the problem and for many helpful discussions.
The research of this paper is partially supported by  NSF of Jiangsu Province Grant No.BK20231309.

\vspace{0.3em}

\noindent
\textbf{AI Disclosure.}
OpenAI models were used as auxiliary tools during the preparation of this manuscript.
All mathematical arguments and proofs were independently verified by the authors, who take full responsibility for the content of the paper.

\bibliographystyle{alpha}
\bibliography{ref}

\end{document}